\documentclass[11pt]{article}

\usepackage[margin=1in]{geometry}
\usepackage{amsmath,amssymb,amsthm}
\usepackage{mathrsfs}
\usepackage{hyperref}
\usepackage{enumitem}

\hypersetup{
  colorlinks=true,
  linkcolor=blue,
  citecolor=blue,
  urlcolor=blue
}

\newtheorem{theorem}{Theorem}
\newtheorem{lemma}{Lemma}
\newtheorem{remark}{Remark}

\newcommand{\R}{\mathbb{R}}

\title{Inequalities on a Class of Function Sets}
\author{Gangsong Leng\\
East China Normal University\\
\href{mailto: Gangsong Leng<lenggangsong@163.com>}{lenggangsong@163.com}
}
\date{}

\begin{document}
\maketitle

\begin{abstract}
We prove a functional extension of an exponential inequality originally proposed by Bin Zhao and proved by Xiaosheng Mou. The main result asserts that if
$\alpha_1\leq \cdots\leq \alpha_n$ and $\sum_{k=1}^n \alpha_k=0$, then
\[
  \sum_{k=1}^n \phi(k\alpha_k)\geq 0
\]
for every odd function $\phi$ that is increasing and convex on $[0,\infty)$. The proof is based on a truncated-sum comparison and the stop-loss characterization of the increasing convex order. As consequences, we recover the original exponential inequality and obtain polynomial and integral variants.
\end{abstract}

\noindent\textbf{2020 Mathematics Subject Classification.} 26D15, 26A51, 60E15.\\
\textbf{Keywords.} Convex functions; Karamata's theorem; increasing convex order; stop-loss transform; rearrangement inequality.

\section{Introduction}

The following theorem was first proposed by Bin Zhao in the ninth issue of the problem-solving column of Mathematics New Star Network. Xiaosheng Mou gave a proof of it; see \cite{Mou2019}.

\begin{theorem}\label{thm:original}
Let
\[
  \alpha_1\leq \alpha_2\leq \cdots\leq \alpha_n,
  \qquad \sum_{k=1}^n\alpha_k=0.
\]
Then
\[
  \sum_{k=1}^n e^{k\alpha_k}
  \geq
  \sum_{k=1}^n e^{-k\alpha_k}.
\]
\end{theorem}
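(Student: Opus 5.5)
The plan is to reduce the inequality to a weak majorization statement and then apply the Tomić–Weyl (weak Karamata) inequality to the convex increasing function $\phi(x)=e^{x}-e^{-x}=2\sinh x$ on $[0,\infty)$. Put $x_k=k\alpha_k$. Since $\sinh$ is odd, the claim is equivalent to
\[
  \sum_{x_k>0}\phi(x_k)\;\geq\;\sum_{x_k<0}\phi(|x_k|).
\]
Here $\phi$ is increasing and convex on $[0,\infty)$ with $\phi(0)=0$. So it suffices to prove the following. Let $y_1\ge y_2\ge\cdots$ be the decreasing rearrangement of the negative magnitudes $|x_k|$ ($x_k<0$), and $z_1\ge z_2\ge\cdots$ that of the positive values $x_k>0$, both padded with zeros to a common length. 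Then for every $j$,
\[
  y_1+\cdots+y_j\;\le\; z_1+\cdots+z_j .
\]
Equivalently, in stop-loss form, $\sum_k (x_k-t)_+\ge\sum_k(-x_k-t)_+$ for all $t\ge0$. Integrating this against $d\phi'(t)$, together with the linear part $\phi'(0^+)x$ (the case $t=0$), then yields the result for any such $\phi$.

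Let $\alpha_1\le\cdots\le\alpha_m<0\le\alpha_{m+1}\le\cdots\le\alpha_n$ and $S=\sum_{k\le m}|\alpha_k|=\sum_{k>m}\alpha_k$. On the negative side I will use two bounds.
\begin{itemize}
\item Since $|\alpha_1|\ge\cdots\ge|\alpha_m|$, we have $k|\alpha_k|\le\sum_{i\le k}|\alpha_i|\le S$, so every $y_i\le S$.
\item Chebyshev's sum inequality (increasing $k$ against decreasing $|\alpha_k|$) gives $\sum_{k\le m}k|\alpha_k|\le \frac{m+1}{2}S$.
\end{itemize}
Hence $y_1+\cdots+y_j\le \min\bigl(jS,\tfrac{m+1}{2}S\bigr)$. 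On the positive side, fix $j\le n-m$. The $j$ largest indices $k=n-j+1,\dots,n$ carry the $j$ largest $\alpha$'s, whose average is at least the mean $S/(n-m)$ of the positive part. Since each index there is at least $n-j+1$,
\[
  z_1+\cdots+z_j\;\ge\;\sum_{k=n-j+1}^{n}k\alpha_k\;\ge\;\frac{(n-j+1)\,j}{n-m}\,S .
\]

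It then remains to compare these bounds in three cases.
\begin{itemize}
\item If $j\le\min(m+1,n-m)$, then $n-j+1\ge n-m$, so $z_1+\cdots+z_j\ge jS\ge y_1+\cdots+y_j$.
\item If $m+1<j\le n-m$, monotonicity in $j$ and the previous case at $j=m+1$ give $z_1+\cdots+z_j\ge (m+1)S\ge \frac{m+1}{2}S\ge y_1+\cdots+y_j$.
\item If $j>n-m$, all positives are included. Their total is $\sum_{k>m}k\alpha_k\ge(m+1)S$, which again dominates.
\end{itemize}
The degenerate case where all $\alpha_k=0$ is trivial.

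The step I expect to need the most care is the first reduction. One must check that weak submajorization of the zero-padded decreasing sequences gives $\sum\phi(z_i)\ge\sum\phi(y_i)$ for increasing convex $\phi$ with $\phi(0)=0$, so that the zero padding is harmless; this is exactly the Tomić–Weyl form of Karamata's theorem. The remaining estimates are elementary, and the crude bound $\frac{m+1}{2}S$ versus $(m+1)S$ leaves room to spare.
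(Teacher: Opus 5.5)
Your proof is correct. Its architecture matches the paper's, which obtains this theorem by taking $\phi=2\sinh$ in Theorem~\ref{thm:main}. In both, the positive-side values $k\alpha_k$ ($k>m$) dominate the negative-side magnitudes $-k\alpha_k$ ($k\le m$) in the increasing convex order. Karamata's theorem for increasing convex $F$ with $F(0)=0$ and the oddness of $\phi$ then finish the argument.

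The difference is in how the key comparison is proved.
\begin{itemize}
\item The paper works in stop-loss form (Lemma~\ref{lem:key}) at a fixed threshold $t\ge0$. A large negative term gives $S\ge B/K+t$ (its $K$ is your $m$). Then $u_+\ge u$ and the rearrangement inequality on a suitable tail finish the job. This needs a case split on $n\le 3K+1$ versus $n>3K+1$ and the tailored cutoff $M=n-2K-1$.
\item You prove the equivalent partial-sum form, indexed by $j$, from three simple estimates: $k|\alpha_k|\le S$, the Chebyshev bound $\frac{m+1}{2}S$ on the negative total, and the top-$j$ bound $\frac{(n-j+1)j}{n-m}S$ on the positive side. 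Your case split compares $j$ with $m+1$ and $n-m$.
\end{itemize}

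Your estimates are more transparent and make the slack visible: the negative total is at most half the positive total, and each negative magnitude is at most $S\le n\alpha_n$. The paper's version delivers the hypothesis of Lemma~\ref{lem:karamata} directly in threshold form. It therefore never needs sorted rearrangements or the equivalence between the partial-sum and stop-loss forms of weak majorization.

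Like the paper's argument, yours gives the conclusion for every $\phi\in\mathscr S$, not just $2\sinh$.
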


Theorem \ref{thm:original} is equivalent to the following form: if
\[
  0<x_1\leq x_2\leq \cdots\leq x_n,
  \qquad x_1x_2\cdots x_n=1,
\]
then
\[
  x_1+x_2^2+\cdots+x_n^n
  \geq
  x_1^{-1}+x_2^{-2}+\cdots+x_n^{-n}.
\]
Indeed, setting $x_k=e^{\alpha_k}$ transforms the two statements into each other.

This note gives a functional generalization of Theorem \ref{thm:original}. Let $\mathscr S$ denote the set of all functions $\phi:\R\to\R$ satisfying the following two conditions:
\begin{enumerate}[label=\textup{(\arabic*)}]
  \item $\phi$ is odd;
  \item $\phi$ is increasing and convex on $[0,\infty)$.
\end{enumerate}
Since $\phi$ is odd, we automatically have $\phi(0)=0$. Clearly, $\mathscr S$ is closed under nonnegative linear combinations; hence it is a convex cone, not a linear space.

\section{The main result}

\begin{theorem}\label{thm:main}
Let
\[
  \alpha_1\leq \alpha_2\leq \cdots\leq \alpha_n,
  \qquad \sum_{j=1}^n\alpha_j=0.
\]
If
\[
  \phi=\sum_{i=1}^m c_i\phi_i,
  \qquad c_i\geq 0,
  \qquad \phi_i\in \mathscr S
  \quad (i=1,2,\ldots,m),
\]
then
\[
  \sum_{j=1}^n\phi(j\alpha_j)\geq 0.
\]
In particular, the same conclusion holds for every $\phi\in\mathscr S$.
\end{theorem}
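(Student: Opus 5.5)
The plan is to reduce the statement to a single $\phi\in\mathscr S$ and then to a weak-majorization (increasing convex order) comparison between two finite lists of nonnegative numbers. Since $\sum_i c_i\phi_i$ with $c_i\ge 0$ gives $\sum_j\phi(j\alpha_j)=\sum_i c_i\sum_j\phi_i(j\alpha_j)$, it suffices to treat one $\phi\in\mathscr S$. Let $p$ be the number of indices with $\alpha_j<0$, so that $\alpha_1\le\cdots\le\alpha_p<0\le\alpha_{p+1}\le\cdots\le\alpha_n$. Put $S=\sum_{j\le p}|\alpha_j|=\sum_{j>p}\alpha_j$. By oddness of $\phi$, the claim becomes
\[
\sum_{j>p}\phi(j\alpha_j)\;\ge\;\sum_{j\le p}\phi\bigl(j|\alpha_j|\bigr),
\]
where all arguments are nonnegative and $\phi$ is increasing and convex on $[0,\infty)$ with $\phi(0)=0$. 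We may assume $1\le p\le n-1$, since otherwise all $\alpha_j=0$. Pad both lists with zeros to a common length; the zeros contribute $\phi(0)=0$. By the stop-loss characterization of the increasing convex order (equivalently, the Tomić--Weyl weak-majorization theorem), it then suffices to show that for every $k$ the sum of the $k$ largest entries of $\{j\alpha_j: j>p\}$ is at least the sum of the $k$ largest entries of $\{j|\alpha_j|: j\le p\}$.

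\emph{Bounding the negative side.} For $j\le p$ we have $|\alpha_1|\ge\cdots\ge|\alpha_p|$. Hence
\[
j|\alpha_j|\le\sum_{i\le j}|\alpha_i|\le S,
\]
so any $k$ of these terms sum to at most $\min(k,p)\,S$.

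\emph{Bounding the positive side.} For $j>p$ both $j$ and $\alpha_j\ge 0$ increase, so the $k$ largest terms are those with $j=n-k+1,\dots,n$, at least when $k\le n-p$. There are two cases.
\begin{itemize}
\item If $k\le n-p$, the top $k$ of the sorted nonnegative $\alpha$'s have average at least the average $S/(n-p)$ of all positive ones. This gives
\[
\sum_{j=n-k+1}^n j\alpha_j\;\ge\;(n-k+1)\sum_{j=n-k+1}^n\alpha_j\;\ge\;\frac{(n-k+1)k}{n-p}\,S .
\]
For $k\le p$ this is at least $kS$, because $n-k+1\ge n-p+1>n-p$. For $k>p$, the negative side is frozen at its full sum, at most $pS$, while the positive side only grows, so the inequality persists.
\item If $k>n-p$, the positive side is its full sum $\sum_{j>p}j\alpha_j\ge(p+1)S$. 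This dominates $\min(k,p)S$.
\end{itemize}
This establishes the truncated-sum comparison for every $k$, and the increasing convex order then yields the theorem.

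The step I expect to need the most care is the invocation of the stop-loss (weak-majorization) characterization. One must make sure it applies to increasing convex functions on $[0,\infty)$ only, not on all of $\R$. The zero-padding is harmless precisely because $\phi(0)=0$ and all entries are nonnegative. One could alternatively verify $\sum(b_j-t)_+\ge\sum(a_j-t)_+$ for $t\ge0$ directly, and then approximate $\phi$ by a combination of an identity term and hinge functions $(x-t)_+$ with nonnegative weights. The linear term is covered by the $k=n$ (full-sum) comparison. The combinatorial estimates above are elementary once the monotonicity of $j|\alpha_j|$-type bounds is noticed, and the key inequality $j|\alpha_j|\le S$ is where the ordering hypothesis on the $\alpha$'s enters.
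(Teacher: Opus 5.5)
Your proof is correct. The overall architecture matches the paper's: split by the sign of $\alpha_j$, use oddness, then invoke the increasing convex order as in Lemma~\ref{lem:karamata}. What differs is the form in which you verify the key comparison.

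The paper proves the stop-loss inequality $\sum_{k>K}(k\alpha_k-t)_+\ge\sum_{k\le K}(-k\alpha_k-t)_+$ for every threshold $t\ge 0$. Because it lower-bounds $(u-t)_+$ by $u-t$, every retained positive term costs a $-t$. Balancing these costs is what forces the case split $n\le 3K+1$ versus $n>3K+1$, and the auxiliary tail of exactly $2K+1$ indices ($M=n-2K-1$).

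You instead check the equivalent top-$k$ (Tomi\'c--Weyl) form after zero-padding. The padding is harmless because $\phi(0)=0$ and all entries are nonnegative. With no threshold to balance, each estimate is one line.

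Both routes rest on the same bound $j|\alpha_j|\le\sum_{i\le j}|\alpha_i|\le S$; the paper's choice of $l$ in Lemma~\ref{lem:key} is exactly this. Your version is shorter and makes the structure visible. Your estimates say precisely that $\{j|\alpha_j|\}_{j\le p}\prec_w(S,\dots,S)\prec_w\{j\alpha_j\}_{j>p}$, with $p$ copies of $S$. This gives the refinement $\sum_{j\le p}\phi(j|\alpha_j|)\le p\,\phi(S)\le\sum_{j>p}\phi(j\alpha_j)$.

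In return, the paper's threshold form feeds directly into the hinge-function representation of $F$ in the remark, and it handles lists of unequal lengths without padding.
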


\begin{proof}
It suffices to prove the conclusion for $\phi\in\mathscr S$; the stated form then follows by taking nonnegative linear combinations.

Suppose
\[
  \alpha_1\leq \cdots\leq \alpha_K<0\leq
  \alpha_{K+1}\leq \cdots\leq \alpha_n.
\]
If all $\alpha_k$ are zero, the conclusion is immediate. We now assume that such a $K$ exists.

We shall use two lemmas. The first lemma is the key point.

\begin{lemma}\label{lem:key}
Under the above assumptions, for every $t\geq 0$,
\[
  \sum_{k>K}(k\alpha_k-t)_+
  \geq
  \sum_{k\leq K}(-k\alpha_k-t)_+,
\]
where $u_+=\max\{u,0\}$.
\end{lemma}

\begin{proof}[Proof of Lemma \ref{lem:key}]
Put
\[
  A=\sum_{k>K}(k\alpha_k-t)_+,
  \qquad
  B=\sum_{k\leq K}(-k\alpha_k-t)_+.
\]
If $B=0$, the conclusion is obvious. Assume below that $B>0$. Since $B$ is the sum of $K$ nonnegative terms, there exists $l\leq K$ such that
\[
  -l\alpha_l-t\geq \frac{B}{K}.
\]
From $\alpha_1\leq \cdots\leq \alpha_l<0$, we obtain
\[
  -\sum_{k=1}^l\alpha_k-t
  \geq -l\alpha_l-t
  \geq \frac{B}{K},
\]
and hence
\[
  -\sum_{k=1}^l\alpha_k\geq \frac{B}{K}+t.
\]
Using the zero-sum condition, we get
\begin{equation}\label{eq:positive-sum}
  \sum_{k>K}\alpha_k
  =-\sum_{k\leq K}\alpha_k
  \geq -\sum_{k\leq l}\alpha_k
  \geq \frac{B}{K}+t.
\end{equation}

First suppose that $n\leq 3K+1$. Since $u_+\geq u$, we have
\[
  A\geq \sum_{k>K}(k\alpha_k-t)
  =\sum_{k>K}k\alpha_k-(n-K)t.
\]
Because $K+1,K+2,\ldots,n$ and $\alpha_{K+1},\ldots,\alpha_n$ are similarly ordered increasing sequences, the rearrangement inequality gives
\[
  \sum_{k>K}k\alpha_k
  \geq
  \frac{n+K+1}{2}\sum_{k>K}\alpha_k.
\]
Combining this with \eqref{eq:positive-sum}, we obtain
\[
  A\geq
  \frac{n+K+1}{2}\left(\frac{B}{K}+t\right)
  -(n-K)t
  =
  \frac{n+K+1}{2K}B
  +\frac{3K+1-n}{2}t.
\]
When $n\leq 3K+1$, the right-hand side is at least $B$. Thus $A\geq B$.

Now suppose that $n>3K+1$. Take
\[
  M=n-2K-1.
\]
Then $M>K$. We estimate $A$ using only the later tail:
\[
  A\geq \sum_{k>M}(k\alpha_k-t)
  =\sum_{k>M}k\alpha_k-(n-M)t.
\]
Again, by the rearrangement inequality,
\[
  \sum_{k>M}k\alpha_k
  \geq
  \frac{n+M+1}{2}\sum_{k>M}\alpha_k.
\]
Since $\alpha_{K+1}\leq \cdots\leq \alpha_n$, the average of the last $n-M$ terms is no smaller than the average of all $n-K$ positive-side terms. Hence
\[
  \sum_{k>M}\alpha_k
  \geq
  \frac{n-M}{n-K}\sum_{k>K}\alpha_k.
\]
Using \eqref{eq:positive-sum} again, we get
\[
  A\geq
  \frac{(n+M+1)(n-M)}{2(n-K)}
  \left(\frac{B}{K}+t\right)
  -(n-M)t.
\]
That is,
\[
  A\geq
  \frac{(n+M+1)(n-M)}{2K(n-K)}B
  +
  \frac{(M+2K+1-n)(n-M)}{2(n-K)}t.
\]
Substituting $M=n-2K-1$, the second term is exactly zero, and
\[
  \frac{(n+M+1)(n-M)}{2K(n-K)}
  =\frac{2K+1}{K}>1.
\]
Therefore $A\geq B$. The lemma is proved in both cases.
\end{proof}

The second lemma is the truncated-sum form of Karamata's theorem. In stochastic order theory, this form is also called the stop-loss characterization of the increasing convex order; see Shaked and Shanthikumar \cite[Chapter 4.A]{ShakedShanthikumar2007}.

\begin{lemma}[Truncated-sum form of Karamata's theorem]\label{lem:karamata}
Let $a_1,\ldots,a_r$ and $b_1,\ldots,b_s$ be nonnegative numbers. Suppose that for every $t\geq 0$,
\[
  \sum_{i=1}^r(a_i-t)_+
  \geq
  \sum_{j=1}^s(b_j-t)_+.
\]
If $F:[0,\infty)\to\R$ is increasing and convex, and $F(0)=0$, then
\[
  \sum_{i=1}^r F(a_i)
  \geq
  \sum_{j=1}^s F(b_j).
\]
\end{lemma}

\begin{remark}
The reason Lemma \ref{lem:karamata} implies inequalities for convex functions is that convex functions can be decomposed into superpositions of the truncated functions $(x-t)_+$. If $F$ is sufficiently smooth and $F(0)=0$, then one has a representation of the form
\[
  F(x)=F'_+(0)x+\\int_0^\infty (x-t)_+\,dF'_+(t).
\]
Since $F$ is convex, its right derivative $F'_+$ is increasing, and therefore $dF'_+(t)$ is a nonnegative measure. Thus, if the truncated-sum inequality holds for every $t\geq 0$, then multiplying by nonnegative weights and integrating gives the desired convex-function inequality. The general case follows by approximation with piecewise linear convex functions. This is precisely the truncated-sum form of Karamata's theorem, also known as the stop-loss characterization of the increasing convex order.
\end{remark}

We now return to the proof of Theorem \ref{thm:main}. Let
\[
  P_k=k\alpha_k \quad (k>K),
  \qquad
  Q_k=-k\alpha_k \quad (k\leq K).
\]
By Lemma \ref{lem:key}, for every $t\geq 0$,
\begin{equation}\label{eq:truncated-comparison}
  \sum_{k>K}(P_k-t)_+
  \geq
  \sum_{k\leq K}(Q_k-t)_+.
\end{equation}
By Lemma \ref{lem:karamata}, taking $F=\phi|_{[0,\infty)}$, we get
\[
  \sum_{k>K}\phi(k\alpha_k)
  \geq
  \sum_{k\leq K}\phi(-k\alpha_k).
\]
Since $\phi$ is odd, for $k\leq K$ we have
\[
  \phi(-k\alpha_k)=-\phi(k\alpha_k).
\]
Therefore
\[
  \sum_{k=1}^n\phi(k\alpha_k)\geq 0.
\]
Thus the assertion holds for each $\phi_i\in\mathscr S$. Multiplying by $c_i\geq 0$ and summing gives the theorem.
\end{proof}

Taking
\[
  \phi(t)=e^t-e^{-t}=2\sinh t,
\]
we have $\phi\in\mathscr S$, and Theorem \ref{thm:original} follows immediately from Theorem \ref{thm:main}.

As a direct application, if
\[
  P(t)=a_1t+a_3t^3+\cdots+a_{2r+1}t^{2r+1},
  \qquad a_{2j+1}\geq 0 \quad (j=0,1,\ldots,r),
\]
then $P\in\mathscr S$. Hence, under the assumptions of Theorem \ref{thm:main},
\[
  \sum_{k=1}^n P(k\alpha_k)\geq 0.
\]
In other words, every polynomial consisting of odd-degree terms with nonnegative coefficients yields an inequality of the same type.

\section{An integral form}

Finally, we point out that Theorem \ref{thm:main} has the following integral form.

\begin{theorem}\label{thm:integral}
Let $f$ be an increasing integrable function on $[0,1]$, and suppose that
\[
  \int_0^1 f(x)\,dx=0.
\]
If $\phi$ is odd and is increasing and convex on $[0,\infty)$, then
\[
  \int_0^1 \phi\bigl(xf(x)\bigr)\,dx\geq 0.
\]
\end{theorem}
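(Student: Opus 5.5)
We will deduce Theorem~\ref{thm:integral} from Theorem~\ref{thm:main} by discretization followed by a limit. For $n\ge 1$ set
\[
  \beta_k^{(n)}=n\int_{(k-1)/n}^{k/n} f(x)\,dx,\qquad k=1,\dots,n.
\]
Since $f$ is increasing, $\beta_1^{(n)}\le\cdots\le\beta_n^{(n)}$. Also $\sum_k\beta_k^{(n)}=n\int_0^1 f=0$. Put $\alpha_k=\beta_k^{(n)}/n$. These are still increasing with zero sum, and $k\alpha_k=\tfrac{k}{n}\beta_k^{(n)}$. Theorem~\ref{thm:main} then gives
\[
  \frac1n\sum_{k=1}^n \phi\Bigl(\tfrac{k}{n}\,\beta_k^{(n)}\Bigr)\ge 0 .
\]
Define the step function $g_n(x)=\tfrac{k}{n}\beta_k^{(n)}$ for $x\in((k-1)/n,k/n]$. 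The display above says exactly that $\int_0^1\phi(g_n(x))\,dx\ge0$. It therefore remains to let $n\to\infty$.

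\textbf{Pointwise convergence.} By the Lebesgue differentiation theorem, or more simply because $f$ is monotone and hence continuous almost everywhere, we have $\beta^{(n)}_{k(x)}\to f(x)$ at every continuity point $x\in(0,1)$. Here $k(x)=\lceil nx\rceil$. Moreover $k(x)/n\to x$. Since $\phi$ is convex on $[0,\infty)$ and odd, it is continuous on $(0,\infty)$ and on $(-\infty,0)$. At $0$, continuity could only fail by a jump; but monotonicity, convexity and $\phi(0)=0$ force $\phi(0^+)=0$, because a convex function on $[0,\infty)$ can only jump up at the endpoint, and $\phi$ is increasing there. Hence $\phi$ is continuous everywhere, and $\phi(g_n(x))\to\phi(xf(x))$ almost everywhere.

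\textbf{Passing to the limit (main obstacle).} If $f$ is bounded, say $|f|\le C$, then $|g_n|\le C$ and $|\phi(g_n)|\le\phi(C)$, so dominated convergence finishes the argument immediately. For general integrable increasing $f$, which may be unbounded near $0$ and near $1$, I would first reduce to the bounded case by truncation. Let $f_N=\max(\min(f,N),-N)$, which is still increasing. Replace it by $\tilde f_N=f_N-c_N$, where $c_N=\int_0^1 f_N\to0$, so that $\tilde f_N$ is increasing, bounded and has zero integral. The bounded case gives $\int_0^1\phi(x\tilde f_N(x))\,dx\ge0$. We then let $N\to\infty$, and the pointwise convergence $\phi(x\tilde f_N)\to\phi(xf)$ holds.

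The limit passage needs care because $\int\phi(xf)$ may not exist in the usual sense. If $\phi$ grows fast, then $\phi(xf)$ need not be integrable, and the statement should be read as an inequality of extended integrals. I would split the integral into its positive part (where $f>0$, roughly $x$ near $1$) and its negative part (where $f<0$, near $0$). On the negative region the factor $x$ tames the singularity. There $|\phi(x\tilde f_N)|$ increases in $N$ (up to the small shift $c_N$), so monotone convergence applies and the negative contribution converges to $\int\phi(xf)^-$. On the positive region Fatou's lemma gives $\liminf\int\phi(x\tilde f_N)^+\ge\int\phi(xf)^+$ in the wrong direction. I would instead again use monotonicity in $N$ for the positive part and monotone convergence, controlling the shift $c_N$ by continuity of $\phi$. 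Combining the two limits yields $\int\phi(xf)^+\ge\int\phi(xf)^-$, which is the claim whenever the integral is defined.
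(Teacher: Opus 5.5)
Your discretization, together with the bounded case of your limit passage, is correct and already proves the theorem as stated, by a different and cleaner route than the paper's. The paper samples point values, $\alpha_k=\frac1n\bigl(f(k/n)-\overline f_n\bigr)$, so it has to recentre by $\overline f_n$, which only tends to $0$. It then restricts to continuous $f$ in order to use Riemann sums, and handles general $f$ by an unspecified approximation with increasing step functions. Your cell averages $\beta_k^{(n)}$ are exactly zero-sum, so the output of Theorem~\ref{thm:main} is literally $\int_0^1\phi(g_n)\,dx\ge 0$. Then a.e.\ convergence (a monotone $f$ is continuous off a countable set) and the bound $|\phi(g_n)|\le\phi(C)$ handle every bounded increasing $f$ at once, with no continuity assumption and no second approximation step. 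You also verify that $\phi$ is continuous at $0$, which the paper uses tacitly. Boundedness is not an extra hypothesis here: a real-valued increasing $f$ on $[0,1]$ satisfies $f(0)\le f\le f(1)$, so your bounded case is the whole theorem.

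The truncation paragraph is therefore unnecessary, and as sketched it would not handle $f$ unbounded on $(0,1)$. The shift $c_N$ depends on $N$ and can be negative, so $\phi(x\tilde f_N)^+$ is not monotone in $N$. When $c_N<0$ you have $\tilde f_N>f$ on $\{|f|\le N\}$, so the positive parts overshoot $\phi(xf)^+$. Continuity of $\phi$ controls this overshoot only pointwise, not in integral, and an integral bound on the $\limsup$ is exactly what you need when $\phi$ grows fast.

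If you want that extension, apply your bounded case to the $1$-Lipschitz function $\psi_t(y)=(y-t)_+-(-y-t)_+$, which lies in $\mathscr S$ for $t\ge0$. Since $x\tilde f_N\to xf$ in $L^1$, this gives $\int_0^1(xf-t)_+\,dx\ge\int_0^1(-xf-t)_+\,dx$ for every $t\ge 0$. Then write $\phi(y)=\phi'_+(0)\,y+\int_{(0,\infty)}(y-s)_+\,d\phi'_+(s)$ for $y\ge0$, and use Tonelli to get $\int_0^1\phi(xf)^+\,dx\ge\int_0^1\phi(xf)^-\,dx$. The right side is automatically finite, since $f(x)<0$ forces $x|f(x)|\le\int_0^x|f|\le\|f\|_1$.
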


\begin{proof}
First suppose that $f$ is continuous on $[0,1]$. For each positive integer $n$, set
\[
  \overline f_n=\frac1n\sum_{k=1}^n f\left(\frac{k}{n}\right),
\]
and
\[
  \alpha_k=
  \frac1n\left(
  f\left(\frac{k}{n}\right)-\overline f_n
  \right),
  \qquad k=1,2,\ldots,n.
\]
Since $f$ is increasing, we have
\[
  \alpha_1\leq \alpha_2\leq \cdots\leq \alpha_n,
  \qquad \sum_{k=1}^n\alpha_k=0.
\]
By Theorem \ref{thm:main},
\[
  \sum_{k=1}^n
  \phi\left(
  \frac{k}{n}
  \left(
  f\left(\frac{k}{n}\right)-\overline f_n
  \right)
  \right)
  \geq 0.
\]
Dividing both sides by $n$ and letting $n\to\infty$, and noting that
\[
  \overline f_n\to \int_0^1 f(x)\,dx=0,
\]
we obtain from the limit of Riemann sums that
\[
  \int_0^1\phi\bigl(xf(x)\bigr)\,dx\geq 0.
\]
For a general increasing integrable function, the conclusion follows by approximation with increasing step functions and then passing to the limit.
\end{proof}

\end{document}